\newcommand{\suml}[0]{\sum\limits}
\newcommand{\lr}[1]{\left( #1 \right)}
\newcommand{\bx}[0]{\mathbf{x}}
\newcommand{\bTheta}[0]{\mathbf{\Theta}}
\newtheorem{theorem}{Theorem}[section]
\newtheorem{lemma}[theorem]{Lemma}
\newtheorem{corollary}[theorem]{Corollary}
\theoremstyle{definition}
\theoremstyle{remark}
\newtheorem{remark}[theorem]{Remark}
\numberwithin{equation}{section}
\begin{document}

\title{A Derandomized Algorithm for RP-ADMM with Symmetric Gauss-Seidel Method}

\author{Jinchao Xu}
\address{Department of Mathematics, Pennsylvania State University, University Park, PA 16802, USA}
\email{xu@math.psu.edu}

\author{Kailai Xu}
\address{Institute for Computational and Mathematical Engineering, Stanford University, CA 94305-4042}
\email{kailaix@stanford.edu}


\author{Yinyu Ye}
\address{Department of Management Science and Engineering, Stanford University, CA 94305-4121}
\email{yyye@stanford.edu}

\date{\today}


\keywords{Alternating direction method of multipliers (ADMM), block symmetric Gauss-Seidel}

\begin{abstract}

For multi-block alternating direction method of multipliers(ADMM), where the objective function can be decomposed into multiple block components, we show that with block symmetric Gauss-Seidel iteration, the algorithm will converge quickly. The method will apply a block symmetric Gauss-Seidel iteration in the primal update and a linear correction that can be derived in view of Richard iteration. We also establish the linear convergence rate for linear systems.

\end{abstract}

\maketitle

%

\section{Introduction.}

The alternating direction method of multipliers has been very popular recently due to its application in the large-scale problem such as big data problems and machine learning\cite{boyd2011distributed}. Consider the following optimization problem 

\begin{equation}\label{equ:admm}
  \begin{aligned}
  \min &\quad \theta_1(x_1)+\theta_2(x_2)\\
  &\quad A_1x_1+A_2x_2  = b\\
  &\quad x_i\in\chi_i, i=1,2
    \end{aligned}
\end{equation}

Here $\theta_i:\mathbb{R}^n_i\rightarrow \mathbb{R}$ are closed proper convex functions; $\chi_i\subset{\mathbb{R}^n}$ are closed convex sets; $A_i^TA_i$ are nonsingular; the feasible set is nonempty.

We first construct the augmented Lagrangian

\begin{equation}
  L(x_1,x_2,\lambda) = \theta_1(x_1)+\theta_2(x_2) +\lambda^T(A_1x_1+A_2x_2-b) + \frac{\beta}{2} \left\|A_1x_1+A_2x_2-b \right\|^2
\end{equation}

Here $\beta$ is a positive constant. Assume we already have $x^k=(x_1^k,x_2^k),\lambda^k$, now we generate $x^{k+1},\lambda^{k+1}$ as follows

\begin{align*}
 x_1^{k+1} & = \arg\min\limits_{x_1} \left\{\theta_1(x_1) + \frac{\beta}{2}\|A_1x_1+A_2x_2^k  - b\|^2 + (\lambda^k)^T\lr{A_1x_1+A_2x_2^k  - b} \right\} \\
  x_2^{k+1} & = \arg\min\limits_{x_2} \left\{\theta_2(x_2) + \frac{\beta}{2}\|A_1x_1^{k+1}+A_2x_2  - b\|^2 + (\lambda^k)^T\lr{A_1x_1^{k+1}+A_2x_2  - b} \right\}\\
\lambda^{k+1} &= \lambda^k + \beta\left( A_1 x_1^{k+1} + A_2 x_2^{k+2}- b\right)
\end{align*}

It is well known that ADMM for two blocks convergences\cite{he20121}\cite{monteiro2013iteration}\cite{nishihara2015general}. A natural idea is to extend ADMM to more than two blocks, i.e., consider the following optimization problem, 

\begin{equation}\label{equ:admm}
  \begin{aligned}
  \min &\quad \suml_{i=1}^m \theta_i(x_i)\\
  &\quad \suml_{i=1}^m A_ix_i = b\\
  &\quad x_i\in\chi_i, i=1,2,\ldots, m    
  \end{aligned}
\end{equation}

Here $\theta_i:\mathbb{R}^n_i\rightarrow \mathbb{R}$ are closed proper convex functions; $\chi_i\subset{\mathbb{R}^n}$ are closed convex sets; $A_i^TA_i$ are nonsingular; the feasible set is nonempty.

The corresponding augmented Lagrangian function reads

\begin{equation}\label{equ:a1}
  \tilde L^{(1)}(x_1,x_2,\ldots, x_m,\lambda) = \suml_{i=1}^m\theta_i(x_i) + \lambda^T(\suml_{i=1}^m A_ix_i-b) + \frac{\beta}{2}\|\suml_{i=1}^m A_ix_i-b\|^2_2
\end{equation}

Note \eqref{equ:a1} can also be written in form of 

\begin{equation}\label{equ:a2}
  \tilde L^{(2)}(x_1,x_2,\ldots, x_m,\lambda) = \suml_{i=1}^m\theta_i(x_i)  + \frac{\beta}{2}\left\|\suml_{i=1}^m A_ix_i-b +\frac{1}{\beta}\lambda \right\|^2_2 -\frac{1}{2\beta}\|\lambda\|^2_2
\end{equation}

The direct extension of ADMM is shown in Algorithm \ref{algo:direct}.
\begin{algorithm}[H]
\caption{Direct Extension of ADMM}
\flushleft{Assume we already have $x^k,\lambda^k$.}
\label{algo:direct}
\begin{align}
 x_i^{k+1} & = \arg\min\limits_{x_i} \left\{\theta_i(x_i) + \frac{\beta}{2}\|\suml_{j=1}^{i-1}A_j x_j^{k+1} + A_ix_i +\suml_{j=i+1}^{m}A_j x_j^k-b+\frac{1}{\beta}\lambda^k  \|^2  \right\}\quad i=1,2,\ldots,m \label{equ:opt1} \\
 \lambda^{k+1} &= \lambda^k +\beta\left(\suml_{j=1}^m A_j x_j^{k+1} - b\right)\label{equ:opt2}
\end{align}
\end{algorithm}
Unfortunately, such a scheme is not ensured to be convergent\cite{chen2016direct}. For example, consider applying ADMM with three blocks to the following problem,

\begin{equation}\label{equ:counter}
  \begin{aligned}
      \min &\quad 0\\
      \mbox{s.t.} &\quad Ax = \mathbf{0}
  \end{aligned}
\end{equation}
where

\begin{equation}
  A =(A_1,A_2,A_3)= \begin{pmatrix}
      1 & 1& 1\\
      1 & 1 & 2\\
      1 & 2 & 2
  \end{pmatrix}
\end{equation}

Then Algorithm \ref{algo:direct} gives a linear system where the spectral radius of the iteration matrix can be shown to be greater than $1$.

As a remedy for the divergence of multi-block ADMM, \cite{sun2015expected} proposes a randomized algorithm(RP-ADMM) and proved its convergence for linear objective function by an expectation argument. The algorithm for computing $x^{k+1},\mu^{k+1}$ from $x^{k},\mu^{k}$ reads as Algorithm \ref{algo:rp-admm}.

\begin{algorithm}
\caption{RP-ADMM}
\label{algo:rp-admm}
\begin{enumerate}
    \item Pick a permutation $\sigma$ of $\{1,2,\ldots, m\}$ uniformly at random.
    \item For $i=1,2,\ldots,m$, compute $x^{k+1}_{\sigma(i)}$ by
    
    \begin{equation}
  x_{\sigma(i)}^{k+1} = \arg\min_{x_{\sigma(i)\in \chi_{\sigma(i)}}}\tilde L^{(2)}(x_{\sigma(1)}^{k+1} , \ldots, x_{\sigma(i-1)}^{k+1}, x_{\sigma(i)},x_{\sigma(i+1)}^{k+1},\ldots, x_{\sigma(m)}^{k+1};\lambda^k)
\end{equation}

\item 
\begin{equation}
  \lambda^{k+1} = \lambda^k + \beta\left(\suml_{i=1}^m A_j x_j^{k+1} - b\right)
\end{equation}

\end{enumerate}
\end{algorithm}

There are other efforts to improve the convergence of multi-block ADMM. For example, in \cite{he2012alternating}, the author suggests a correction after $x^{k+1},\mu^{k+1}$ obtained from Algorithm \ref{algo:direct}. To make comparison between different algorithms, we adapt the algorithm ADM-G in \cite{he2012alternating} to the form in Algorithm \ref{algo:admmg}.

\begin{algorithm}[H]
\caption{The ADMM with Gaussian back substitution(G-ADMM)}
\label{algo:admmg}
Let $\alpha\in(0,1),\beta>0$.
\begin{enumerate}
    \item Prediction step.
    \begin{align}
 \tilde x_i^{k} & = \arg\min\limits_{x_i} \left\{\theta_i(x_i) + \frac{\beta}{2}\|\suml_{j=1}^{i-1}A_j \tilde x_j^{k} + A_ix_i +\suml_{j=i+1}^{m}A_j x_j^k-b+\frac{1}{\beta}\lambda^k  \|^2  \right\}\\ 
 & \qquad\qquad\qquad\qquad\qquad\qquad\qquad\qquad\qquad\qquad i=1,2,\ldots,m \nonumber 
 \end{align}
\item Correction step.

Assume $A^TA=D+L+L^T$, where $L$ is lower triangular matrix and $D$ is a diagonal matrix.
\begin{equation}\label{equ:correction}
  \left\{ 
  \begin{aligned}
  x^{k+1} &= x^k + \alpha (D+L)^{-1}D(\tilde x^k-x^k)\\
  \lambda^{k+1}     &= \lambda^k +\alpha \beta\left(\suml_{i=1}^m A_j \tilde x_j^{k} - b\right)
  \end{aligned}
 \right.
\end{equation}

\end{enumerate}
\end{algorithm}

In this paper, we propose an approach based on the insights into the two different algorithms described above(RP-ADMM and G-ADMM). We attribute the convergence of RP-ADMM to the symmetrization of the optimization step. Indeed, when we randomly permute the order of optimization variables, it is actually symmetrizing the update procedure for $x^k$. In Algorithm \ref{algo:direct}, we may have totally different convergence behavior if we exchange the order of variables $x_1,x_2,\ldots,x_m$, for example, we may get a convergence scheme if we do update $x_1\rightarrow x_2\rightarrow \ldots\rightarrow x_m$, and a divergence scheme if we do update $x_m\rightarrow x_{m-1}\rightarrow \ldots\rightarrow x_1$. This is not desirable and Algorithm \ref{algo:rp-admm} overcomes this by doing a random `shuffling'. Based on this observation, we propose a Symmetric Gauss-Seidel ADMM(S-ADMM) scheme for the multi-block problem and prove its convergence in the worst case, i.e., the objective function is $0$(not strongly convex).

Meanwhile, in G-ADMM, the authors used a correction step after each loop. In our algorithm, we will use Richard iteration\cite{jinchaoxulecture} to `correct' the dual variables in the Schur decomposition, which turns out to be the gradient descent for dual variables in the augmented Lagrangian method. We remark that although viewed in iteration methods, Richard iteration is not the best method to solve $Ax=b$, but still, it outperforms Algorithm \ref{algo:admmg} in many numerical examples we did.


The rest of the paper is organized as follows. In Section \ref{sect:v-cycle}, we describe S-ADMM algorithm and prove its convergence in the worst case. In Section \ref{sect:numerical}, we apply the algorithm to solve some concrete examples and compare it with the existing methods numerically. 

\section{S-ADMM.}\label{sect:v-cycle}

We propose the following algorithm to solve the problem.

\begin{algorithm}
\caption{S-ADMM}    
\label{algo:v-cycle}

Let $\beta>0,\omega\in(0,2\beta)$. Assume we already have $x^k$.

\begin{enumerate}
    \item Forward optimization.
    
    \begin{align}
 \tilde x_i^{k} & = \arg\min\limits_{x_i} \left\{\theta_i(x_i) + \frac{\beta}{2}\|\suml_{j=1}^{i-1}A_j \tilde x_j^{k} + A_ix_i +\suml_{j=i+1}^{m}A_j x_j^k-b+\frac{1}{\beta}\lambda^k  \|^2  \right\}\\ 
 & \qquad\qquad\qquad\qquad\qquad\qquad\qquad\qquad\qquad\qquad i=1,2,\ldots,m \nonumber
\end{align}
\item Backward optimization.
\begin{align}
  x_i^{k+1} & = \arg\min\limits_{x_i} \left\{\theta_i(x_i) + \frac{\beta}{2}\|\suml_{j=1}^{i-1}A_j \tilde x_j^{k} + A_ix_i +\suml_{j=i+1}^{m}A_j  x_j^{k+1}-b+\frac{1}{\beta}\lambda^k  \|^2  \right\}\\ 
 & \qquad\qquad\qquad\qquad\qquad\qquad\qquad\qquad\qquad\qquad i=m-1,m-2,\ldots,1 \nonumber 
\end{align}
\item Dual update.

\begin{equation}
  \lambda^{k+1} = \lambda^k +\omega\left(\suml_{j=1}^m A_j x_j^{k+1} - b\right)
\end{equation}

\end{enumerate}
\end{algorithm}

Consider the following optimization problem,

\begin{align}
    \min\limits_{x_i,i=1,2,\ldots,m} &\quad \suml_{i=1}^m \frac{1}{2}\theta_i x_i^2\\
    s.t. &\quad \suml_{i=1}^m A_ix_i = b
\end{align}

Here $\theta_i\geq 0$. The augmented Lagrangian is 

\begin{equation}
  L(x_1,x_2,\ldots,x_m;\theta_1,\theta_2,\ldots, \theta_m) = \frac{1}{2}\suml_{i=1}^m \theta_i x_i^2-\mu^T\left(\suml_{i=1}^m A_ix_i-b\right) + \frac{\beta}{2}\left\|\suml_{i=1}^m A_ix_i-b\right\|^2
\end{equation}

The optimization problem is equivalent to solve

\begin{equation}
  \frac{\partial L}{\partial x_i} = \frac{\partial L}{\partial \theta_i} = 0, i=1,2,\ldots,m
\end{equation}

i.e.

\begin{equation}\label{equ:more}
  \begin{pmatrix}
\theta_1+\beta A_1^TA_1 & \beta A_1^TA_2 & \ldots & \beta A_1^TA_m  & -A_1^T \\ 
\beta A_2^TA_1  & \theta_2+\beta A_2^TA_2  & \ldots &\beta A_2^TA_m  &-A_2^T \\ 
\vdots &\vdots  &\vdots  & \vdots &\vdots \\ 
\beta A_m^TA_1  &\beta A_m^TA_2  & \ldots & \theta_m+\beta A_m^TA_m  & -A_m^T \\ 
 -A_1&-A_2  &\ldots  &-A_m  & 0
\end{pmatrix}\begin{pmatrix}
x_1\\ 
x_2\\ 
\vdots\\ 
x_M\\ 
\mu
\end{pmatrix}=\begin{pmatrix}
\beta A_1^Tb\\ 
\beta A_2^Tb\\ 
\vdots\\ 
\beta A_m^Tb\\ 
-b
\end{pmatrix}
\end{equation}

Let 

\begin{equation}
  \theta = \begin{pmatrix}
\theta_1 &  &  & \\ 
 & \theta_2 &  & \\ 
 &  &\ddots  & \\ 
 &  &  & \theta_m
\end{pmatrix}
\end{equation}

and $A=(A_1 A_2 \ldots A_m)$, $G = \theta + \beta A^TA$, then (\ref{equ:more}) is equivalent to 

\begin{equation}\label{equ:mat}
  \begin{pmatrix}
G & -A^T \\ 
-A  & 0 
\end{pmatrix}\begin{pmatrix}
x\\ 
\mu
\end{pmatrix}=
\begin{pmatrix}
\beta A^Tb\\ 
-b\end{pmatrix}
\end{equation}

Now we assume $G$ is invertible. Note this is true if we assume $\theta_i>0,\forall i$, or $A^TA$ is nonsingular. Or most generally, let $A^TA = U^T\Lambda U$, where $U$ is orthogonal matrix, we have $G = U^T(\theta + \beta \Lambda)U$. We assume $\theta_i>0$ or $\Lambda_{ii}>0$ for all $i=1,2,\ldots,m$.

If we do Schur decomposition, we obtain

\begin{equation}\label{equ:schur}
  \begin{pmatrix}
G & -A^T \\ 
0  & AG^{-1}A^T 
\end{pmatrix}\begin{pmatrix}
x\\ 
\mu
\end{pmatrix}=
\begin{pmatrix}
\beta A^Tb\\ 
b-\beta AG^{-1}A^Tb
\end{pmatrix}
\end{equation}

The well-known augmented Lagrangian method solves (\ref{equ:schur}) by doing Gauss-Seidel iteration\cite{jinchaoxulecture}\cite{jinchaoxulecture2} , i.e.

\begin{align}
    Gx^{n+1} - A^T \mu^n & = \beta A^Tb\label{equ:1} \\
    \mu^{n+1} &= \mu^n + \omega (b-\beta AG^{-1}A^T b - AG^{-1}A^T \mu^n)\label{equ:2}
\end{align}

Note (\ref{equ:2}) is exactly Richardson iteration for $AG^{-1}A^T\mu = b-\beta AG^{-1}A^T b$. Due to (\ref{equ:1}), we have

\begin{equation}
  x^{n+1} = G^{-1}A^T \mu^n + \beta G^{-1}A^Tb
\end{equation}

Then

\begin{equation}
  Ax^{n+1} = AG^{-1}A^T \mu^n + \beta AG^{-1}A^Tb
\end{equation}

(\ref{equ:2}) is exactly

\begin{equation}
  \mu^{n+1} = \mu^n + \omega (b- Ax^{n+1})
\end{equation}
which is consistent with augmented Lagrangian method.

We think the key here is to symmetrize the iteration process for (\ref{equ:1}). Therefore, we propose the symmetric Gauss-Seidel method for (\ref{equ:1}).

Let $G=L+L^T+D$, where $L$ is a lower triangular matrix and $D$ is a diagonal matrix.

\begin{align*}
    (L+D)x^{n+\frac{1}{2}} & = -L^Tx^n + A^T \mu^n + \beta A^T b\\
    (L^T+D)x^{n+1} & = -Lx^{n+\frac{1}{2}} + A^T\mu^n + \beta A^T b
\end{align*}

We have
\begin{equation}
 x^{n+1} = x^n + \tilde G^{-1}(A^T\mu^n + \beta A^T b-Gx^n)
\end{equation}
where $\tilde G =  (L+D)D^{-1}(L^T+D)$.

Instead of solving (\ref{equ:2}) directly, we substitute $G$ by $\tilde G$

\begin{align}
    \mu^{n+1} &= \mu^n + \omega (b-Ax^{n+1})
\end{align}

We will later prove that the scheme converges.

In sum, the scheme is 

\begin{align}
    (L+D)x^{n+\frac{1}{2}} & = -L^Tx^n + A^T \mu^n + \beta A^T b\label{equ:c1} \\
    (L^T+D)x^{n+1} & = -Lx^{n+\frac{1}{2}} + A^T\mu^n + \beta A^T b\label{equ:c2}\\
    \mu^{n+1} &=\mu^n + \omega (b-Ax^{n+1})\label{equ:4}
\end{align}

or in the compact form

\begin{equation}\label{equ:58}
  \begin{pmatrix}
x^{n+1}\\ 
\mu^{n+1}
\end{pmatrix}
=
\begin{pmatrix}
x^{n}\\ 
\mu^{n}
\end{pmatrix} + \begin{pmatrix}
- \tilde G & 0 \\ 
\omega A  & I 
\end{pmatrix}^{-1}\left(
\begin{pmatrix}
- \beta A^T b\\ 
\omega b
\end{pmatrix}-
\begin{pmatrix}
- G &  A^T\\ 
\omega A & 0 
\end{pmatrix}
\begin{pmatrix}
x^n\\ 
\mu^n
\end{pmatrix}
 \right)
\end{equation}

We now select appropriate $\omega$ such that 

\begin{equation}\label{equ:rho}
  \rho\left(I-\begin{pmatrix}
- \tilde G & 0 \\ 
\omega A  & I 
\end{pmatrix}^{-1}\begin{pmatrix}
- G &  A^T\\ 
\omega A & 0
\end{pmatrix} \right)<1
\end{equation}

%
%

\begin{theorem}\label{equ:powerful}
    Assume $H=\tilde G^{-1}G$ satisfies $\lambda(H)\in(0,1)$ and $G = \beta A^TA$ is invertible. If $0<\omega<2\beta$, then (\ref{equ:rho}) holds.
\end{theorem}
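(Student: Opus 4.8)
The plan is a direct eigenvalue analysis of the iteration matrix, reducing the whole question to a scalar quadratic stability test. Write the matrix appearing in \eqref{equ:rho} as $M = I - M_1^{-1}M_2$ with
\[
M_1=\begin{pmatrix} -\tilde G & 0 \\ \omega A & I \end{pmatrix},\qquad
M_2=\begin{pmatrix} -G & A^T \\ \omega A & 0 \end{pmatrix}.
\]
Since $M_1$ is block lower triangular with invertible diagonal blocks ($\tilde G=(L+D)D^{-1}(L^T+D)$ is nonsingular because $G$ is SPD, forcing $D\succ 0$), it is invertible and its inverse is available in closed form; but it is cleaner not to expand $M$ and instead to rewrite the eigenvalue equation $Mz=\nu z$ as $M_2 z=(1-\nu)M_1 z$. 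I would first dispose of $\nu=1$ separately: $\nu=1$ is an eigenvalue of $M$ iff $M_2$ is singular, and by the Schur complement $M_2$ is invertible precisely when $G$ is invertible and $A$ has full row rank, so under the standing assumptions this value does not occur.

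Now take $\nu\neq 1$, set $t:=1-\nu\neq 0$, and split $z=(u,v)$. The bottom block of $M_2 z=tM_1 z$ reads $\omega Au=t(\omega Au+v)$, which gives $v=\frac{\omega(1-t)}{t}Au$. Substituting into the top block $-Gu+A^Tv=-t\tilde Gu$ and using the hypothesis $G=\beta A^TA$ (so $A^TA=G/\beta$) reabsorbs the $A^TA$ term back into $G$, yielding
\[
\Big(1-\tfrac{\omega(1-t)}{\beta t}\Big)\,Gu \;=\; t\,\tilde G u .
\]
One checks the degenerate sub-cases cannot occur: $u=0$ would force $v=0$, and $1-\omega(1-t)/(\beta t)=0$ would force $\tilde Gu=0$ hence $u=0$. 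Therefore $u$ is an eigenvector of $H=\tilde G^{-1}G$; by hypothesis its eigenvalue $h$ is real and lies in $(0,1)$. Eliminating $u$ and clearing denominators gives $\beta t^2-h(\beta+\omega)t+h\omega=0$, and substituting $t=1-\nu$ and dividing by $\beta$ produces
\[
\nu^2+\Big(h\big(1+\tfrac{\omega}{\beta}\big)-2\Big)\nu+(1-h)=0 .
\]
Thus every eigenvalue $\nu$ of $M$ is a root of such a quadratic $p(\nu)$ with real coefficients, for some $h\in(0,1)$.

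It remains to show both roots of $p$ lie in the open unit disk, which by the Schur--Cohn (Jury) criterion for a monic real quadratic is equivalent to $p(1)>0$, $p(-1)>0$, and $|p(0)|<1$. Here $p(1)=h\omega/\beta>0$ since $h>0$ and $\omega>0$; $p(-1)=4-h(2+\omega/\beta)>0$ since $h<1$ and $\omega/\beta<2$ give $h(2+\omega/\beta)<4$; and $|p(0)|=|1-h|=1-h<1$ since $h>0$. Hence $|\nu|<1$ for every eigenvalue of $M$, i.e.\ \eqref{equ:rho} holds.

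The step I expect to be the main obstacle is the middle one: collapsing the coupled $(u,v)$ system into the statement that $u$ is a single eigenvector of $H$ with a \emph{real} eigenvalue in $(0,1)$. This is exactly where the special structure $G=\beta A^TA$ is used (it is what lets $A^TA$ be folded back into $G$ so that $\tilde G^{-1}G=H$ appears), and where one must carefully exclude the degenerate cases $t=0$, $u=0$, and $\beta t=\omega(1-t)$; once the scalar quadratic with real coefficients and the bound $h\in(0,1)$, $\omega/\beta\in(0,2)$ are in hand, the stability verification is routine.
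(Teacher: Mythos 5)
Your proposal is correct and takes essentially the same route as the paper: both reduce the eigenvalue problem for the block iteration matrix to the statement that the first component of the eigenvector is an eigenvector of $H=\tilde G^{-1}G$ (folding $A^TA$ back into $G$ via $G=\beta A^TA$), arriving at the same scalar quadratic; the paper finishes by solving the quadratic explicitly and splitting into real/complex discriminant cases, whereas you finish with the Schur--Cohn/Jury test, which is only a tidier final step. One small caveat: your exclusion of $\nu=1$ requires $A$ to have full row rank (so that $AG^{-1}A^T$, hence $M_2$, is invertible), which the theorem does not state explicitly --- but the paper's own proof relies on the same implicit assumption when it declares $z=G^{-1}A^Ty=0$ ``a contradiction,'' which presumes $A^Ty\neq 0$ for $y\neq 0$.
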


\begin{proof}
    \begin{equation}
  \lambda\left(\begin{pmatrix}
- \tilde G & 0 \\ 
\omega A  & I 
\end{pmatrix}^{-1}\begin{pmatrix}
- G &  A^T\\ 
\omega A & 0 
\end{pmatrix} \right) = \begin{pmatrix}
    \tilde G^{-1}G & -\tilde G^{-1}A^T \\
    -\omega A\tilde G^{-1}G + \omega A & \omega A\tilde G^{-1}A^T
\end{pmatrix}
\end{equation}

Assume $\lambda$ is an eigenvalue of the above matrix and $\begin{pmatrix}
    x\\
    y
\end{pmatrix}$ is the corresponding eigenvector, we then have

\begin{equation}
  \begin{pmatrix}
    \tilde G^{-1}G & -\tilde G^{-1}A^T \\
    -\omega A\tilde G^{-1}G + \omega A & \omega A\tilde G^{-1}A^T
\end{pmatrix}\begin{pmatrix}
    x\\
    y
\end{pmatrix} = \lambda \begin{pmatrix}
    x\\
    y
\end{pmatrix}
\end{equation}

which is equivalent to

\begin{align}
    Hx - Hz &= \lambda x\label{equ:x1} \\
    -\omega KHx + \omega Kx + \omega KHz &= \lambda z\label{equ:x2}
\end{align}

where $z=G^{-1}A^Ty,K = G^{-1}A^TA=\frac{1}{\beta}I, H = \tilde G^{-1}G$. Let $c=\frac{\omega}{\beta}$, it is easy to obtain

\begin{equation}
  c(1-\lambda)x = \lambda z
\end{equation}

From this we can see if $\lambda=0$, then $x=0$, and from (\ref{equ:x1}) we have $Hz=0$, as $H$ is nonsingular, we have $z=0$, a contradiction. Thus $\lambda\neq 0$ and

\begin{equation}
  z = \frac{c(1-\lambda)}{\lambda}x
\end{equation}

Plug it into (\ref{equ:x1}), we have

\begin{equation}
  Hx = \frac{\lambda}{1-\frac{c(1-\lambda)}{\lambda}}x
\end{equation}

This indicates 

\begin{equation}\label{equ:ne}
  0<\frac{\lambda}{1-\frac{c(1-\lambda)}{\lambda}}<1
\end{equation}

Now we estimate $\lambda$, let

\begin{equation}
  \frac{\lambda}{1-\frac{c(1-\lambda)}{\lambda}} = \xi\in(0,1)
\end{equation}
We have

\begin{align}
\lambda^2 -(\xi+c\xi)\lambda + c\xi &=0\\
  \lambda &= \frac{\xi(1+c)\pm \sqrt{\xi^2(1+c)^2-4\xi c}}{2}\label{equ:rootsol}
\end{align}

\begin{enumerate}
    \item If $\xi^2(1+c)^2-4\xi c\geq 0$, i.e. $\xi\geq \frac{4c}{(1+c)^2}$, $\lambda$ will be real. Note in this case $c\neq 1$. We have from the first inequality in (\ref{equ:ne})
    
    \begin{equation}
  \lambda{ \geq \frac{c}{c+1}}
\end{equation}

and from the second     
    \begin{equation}
  (\lambda - c)(\lambda -1)<0
\end{equation}

On condition that $c<2,i.e. \omega<2\beta$, we have $0<\lambda<2$, and thus the statement holds.

\item If $\xi^2(1+c)^2-4\xi c< 0$,i.e., $\xi< \frac{4c}{(1+c)^2} $, $\lambda$ will be a complex number. And we have
\begin{equation}
 |\lambda - 1| = \frac{1}{2}\sqrt{(\xi(1+c)-2)^2 + 4\xi c - \xi^2(1+c)^2}=\sqrt{1-\xi}<1 
\end{equation}

Thus, (\ref{equ:rho}) holds.

\end{enumerate}

\end{proof}

\begin{remark}
    We would like to point out the relationship between the iteration matrix
    
    \begin{equation}
  \begin{pmatrix}
    I-\tilde G^{-1}A^TA & \tilde G^{-1}A^T \\
     A\tilde G^{-1}A^TA -  A &  A\tilde G^{-1}A^T
\end{pmatrix}
\end{equation}
when $\omega=1$. In \cite{sun2015expected}, they discuss the eigenvalues of 

\begin{equation}
  M = \begin{pmatrix}
    I-QA^TA & QA^T \\
     AQA^TA -  A &  AQA^T
\end{pmatrix}
\end{equation}
and proved that 

\begin{equation}
  \frac{(1-\lambda)^2}{1-2\lambda}\in  \mathrm{eig}(QA^TA) \Leftrightarrow \lambda \in \mathrm{eig}(M)
\end{equation}

Based on this observation, it is proved that if $\mathrm{eig}(QA^TA)
\in(0,\frac{4}{3})$, then $\mathrm{eig}(M)\in (-1,1) $. In our example, $Q=\tilde G^{-1}$ when $\omega=1$. Note

\begin{equation}\label{equ:eig}
  \mathrm{eig}(QA^TA) = \mathrm{eig}(\tilde G^{-1}G)\in(0,1)
\end{equation}

However, this does not mean the convergence behavior is better for S-ADMM, as we can only obtain $\mathrm{eig}(M)\in (-1,1) $ from \eqref{equ:eig}.

\end{remark}

We can now actually prove that the scheme (\ref{equ:c1})(\ref{equ:c2})(\ref{equ:4}) converges.

\begin{lemma}
    The eigenvalues of $\tilde G^{-1}G$ are distributed in $(0,1)$.
\end{lemma}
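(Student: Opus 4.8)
The plan is to reduce the assertion to two positive‑(semi)definiteness facts about $G$ and $\tilde G = (L+D)D^{-1}(L^T+D)$, after which the eigenvalue bounds come out of a generalized Rayleigh‑quotient computation. Throughout I work in the worst‑case setting of this section, where $G=\beta A^TA$ and $G$ is assumed invertible (as in Theorem~\ref{equ:powerful}).

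First I would record that both $G$ and $\tilde G$ are symmetric positive definite. The matrix $G=\beta A^TA$ is symmetric and, being invertible, is SPD; in particular each diagonal entry $e_i^TGe_i=\beta\|Ae_i\|^2$ is strictly positive, so the diagonal part $D$ is a positive diagonal matrix. Hence $L+D$ is lower triangular with strictly positive diagonal, so it is invertible, and $\tilde G=(L+D)D^{-1}(L+D)^T$ is a congruence transform of the SPD matrix $D^{-1}$ and is therefore SPD as well. Consequently $H=\tilde G^{-1}G$ is similar to the symmetric matrix $\tilde G^{-1/2}G\tilde G^{-1/2}$, so all eigenvalues of $H$ are real and each one equals the generalized Rayleigh quotient $x^TGx/x^T\tilde Gx$ for a suitable real eigenvector $x\neq 0$.

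Next I would expand $\tilde G$. A one‑line computation gives $\tilde G=(L+D)D^{-1}(L^T+D)=LD^{-1}L^T+L+L^T+D$, so that $\tilde G-G=LD^{-1}L^T\succeq 0$ since $D^{-1}\succ 0$. With this in hand the bounds are immediate: for an eigenpair $Hx=\lambda x$ one has $\lambda=x^TGx/x^T\tilde Gx$, the numerator is strictly positive because $G\succ 0$, which gives $\lambda>0$, while $x^T\tilde Gx=x^TGx+\|D^{-1/2}L^Tx\|^2\ge x^TGx$, which gives $\lambda\le 1$. Hence every eigenvalue of $\tilde G^{-1}G$ lies in $(0,1]$.

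The one point that I expect to be the real content is the upper endpoint. The bound $\lambda\le 1$ is actually attained: since $L$ is (block‑)strictly lower triangular its first (block) row vanishes, so $L^Te_1=0$, whence $\tilde Ge_1=Ge_1$ and $\lambda=1$ is genuinely an eigenvalue of $\tilde G^{-1}G$. Thus the sharp statement is $\lambda(\tilde G^{-1}G)\subset(0,1]$ rather than the open interval. This is still exactly what is needed for Theorem~\ref{equ:powerful}: the borderline value $\xi=1$ there produces, for the outer iteration matrix, $\lambda=1$ or $\lambda=\omega/\beta\in(0,2)$, and in either case $|1-\lambda|<1$ once $0<\omega<2\beta$. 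I would therefore either state the lemma with the closed interval, or — if the open interval is wanted verbatim — note that the directions in $\ker L^T$ are invariant and carry no information lost in the downstream convergence argument, and establish strict inequality on the complementary subspace.
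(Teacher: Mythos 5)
Your argument is essentially the paper's: both hinge on the identity $\tilde G=(L+D)D^{-1}(L^T+D)=G+LD^{-1}L^T$ together with a symmetrization step (congruence with $\tilde G^{-1/2}$, equivalently the generalized Rayleigh quotient $x^TGx/x^T\tilde Gx$), which gives positivity and the upper bound at once. Where you go beyond the paper is the endpoint, and you are right to insist on it: since $L$ is strictly (block) lower triangular, $L^Te_1=0$, so $LD^{-1}L^T$ is singular, $\tilde Ge_1=Ge_1$, and $\lambda=1$ is \emph{always} an eigenvalue of $\tilde G^{-1}G$. The paper's proof claims the eigenvalues of $G^{-1}\tilde G$ are ``all greater than $1$'', which is too strong; the correct conclusion is $\lambda(\tilde G^{-1}G)\subset(0,1]$, so the lemma's open interval --- and, strictly speaking, the hypothesis $\lambda(H)\in(0,1)$ of Theorem~\ref{equ:powerful} --- should be the half-open one. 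Your repair of the downstream argument is also correct: taking $\xi=1$ in the proof of Theorem~\ref{equ:powerful} yields real roots $\lambda\in\{1,\,c\}$ with $c=\omega/\beta\in(0,2)$, and both satisfy $|1-\lambda|<1$ (the eigenvalue $\lambda=1$ of $H$ just contributes a zero eigenvalue of the error-propagation matrix), so \eqref{equ:rho} still holds for $0<\omega<2\beta$ and the Corollary is unaffected.
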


\begin{proof}

Note 

\begin{equation}
  \mathrm{eig}(G^{-1}LD^{-1}L^T) = \mathrm{eig}(\tilde G^{-\frac{1}{2}}LD^{-1}L^T\tilde G^{-\frac{1}{2}})
\end{equation}

and therefore the eigenvalues of $G^{-1}\tilde G$ are all greater than $1$. Thus the eigenvalues of $\tilde G^{-1}G$ are distributed in $(0,1)$

\end{proof}

\begin{corollary}
    If $0<\omega<2\beta$, then the scheme (\ref{equ:c1})(\ref{equ:c2})(\ref{equ:4}) converges for $\theta = \mathbf{0}$ and full column rank $A$.
\end{corollary}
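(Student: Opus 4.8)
The plan is to combine the two results established just before the corollary. The preceding lemma shows that the spectrum of $\tilde G^{-1}G$ lies in $(0,1)$, which is exactly the hypothesis $\lambda(H)\in(0,1)$ needed by Theorem \ref{equ:powerful}. The other hypothesis of that theorem is that $G=\beta A^TA$ be invertible; under the stated assumptions $\theta=\mathbf{0}$ forces $G=\beta A^TA$, and full column rank of $A$ makes $A^TA$ (hence $G$) nonsingular. So first I would verify that, with $\theta=\mathbf{0}$ and $A$ of full column rank, all hypotheses of Theorem \ref{equ:powerful} are met: $G$ equals $\beta A^TA$ and is invertible, and $H=\tilde G^{-1}G$ has spectrum in $(0,1)$ by the lemma.

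Next I would invoke Theorem \ref{equ:powerful} with $0<\omega<2\beta$ to conclude that the spectral radius bound \eqref{equ:rho} holds, i.e.
\begin{equation*}
  \rho\left(I-\begin{pmatrix} -\tilde G & 0 \\ \omega A & I \end{pmatrix}^{-1}\begin{pmatrix} -G & A^T \\ \omega A & 0 \end{pmatrix}\right)<1.
\end{equation*}
Then I would recall that the scheme \eqref{equ:c1}\eqref{equ:c2}\eqref{equ:4} is precisely the stationary linear iteration written in compact form in \eqref{equ:58}, whose iteration matrix is exactly the matrix appearing in \eqref{equ:rho}. A stationary linear iteration $v^{n+1}=Mv^n+c$ converges (to the unique fixed point, which here is the solution of \eqref{equ:mat}) if and only if $\rho(M)<1$; applying this standard fact finishes the argument.

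I would also check the one bookkeeping point the corollary glosses over: that the fixed point of \eqref{equ:58} is indeed a solution of the original system \eqref{equ:mat}, so that convergence is to the right thing. This is immediate from the construction — \eqref{equ:58} is equivalent to $Gx=A^T\mu+\beta A^Tb$ and $Ax=b$, which is \eqref{equ:mat} — but it is worth stating. With $\theta=\mathbf{0}$ and $A$ full column rank, \eqref{equ:mat} has a unique solution, so ``converges'' is unambiguous.

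Honestly, there is no real obstacle here: the corollary is a direct specialization, assembling the lemma (spectrum of $\tilde G^{-1}G$) and Theorem \ref{equ:powerful} (the $\omega$-dependent spectral radius bound) with the elementary convergence criterion for stationary iterations. The only mild subtlety worth a sentence is confirming that $\theta=\mathbf{0}$ does not break anything in Theorem \ref{equ:powerful} — but that theorem was already stated for $G=\beta A^TA$, i.e. exactly the $\theta=\mathbf{0}$ case, so the fit is perfect. I would therefore keep the proof to a few lines.
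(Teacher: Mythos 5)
Your proposal is correct and follows exactly the route the paper intends: the paper leaves this corollary unproved precisely because it is the immediate combination of the preceding lemma (spectrum of $\tilde G^{-1}G$ in $(0,1)$), Theorem \ref{equ:powerful} applied with $G=\beta A^TA$ invertible by full column rank, and the standard spectral-radius criterion for the stationary iteration \eqref{equ:58}. Your extra remark that the fixed point of \eqref{equ:58} is the solution of \eqref{equ:mat} is a sensible bookkeeping addition, not a deviation.
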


\newpage

\section{Numerical Examples}\label{sect:numerical}

In this section, we apply the S-ADMM to solve the problem of counterexample proposed in \cite{chen2016direct} and also a quadratic objective function with 1-norm penalty and linear constraint. We compare this method with Algorithm \ref{algo:rp-admm} and Algorithm \ref{algo:admmg}.

The code is written in Python and run on an x86\_64 Linux machine.

\subsection{Counter-example in \cite{chen2016direct}}

The problem is presented in \eqref{equ:counter}. It is analyzed in \cite{sun2015expected} that a cyclic ADMMM is a divergent scheme, and in the same paper the author proved that Algorithm \ref{algo:rp-admm} convergences with high probability. 

To make the result comparable to each other, we pick the same initial points $b=(0,0,0)$ and $x_0=(1,1,1)$, with $\beta=4.0$ and $\alpha=0.2$.

The result is presented in Figure \ref{fig:1}. We see that S-ADMM performs as well as PR-ADMM algorithm, but it is more oscillating compared to G-ADMM.

\begin{figure}[H] 
\centering

\includegraphics[width=0.8\textwidth,keepaspectratio]{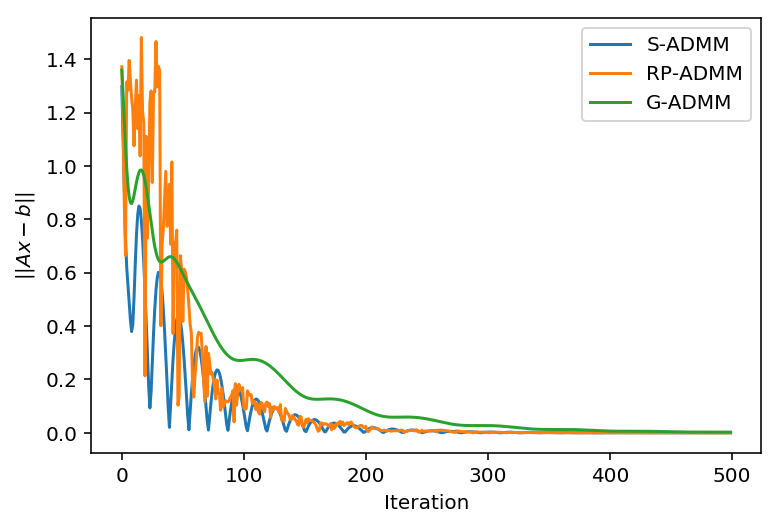}
\caption{Counter example, the curves describes $\|Ax^k-b\|$ in the iterations.}
\label{fig:1}
\end{figure}

\subsection{Quadratic Objective Function with 1-norm Penalty.} In this example, we solve the following manufactured optimization problem.

\begin{equation}
  \begin{aligned}
  \min &\quad \suml_{i=1}^{10} \bx^T\Theta_i\bx + |\bx|_1\\
  \mbox{s.t.} &\quad \suml_{i=1}^{10} A_i\bx_i = \mathbf{b}
  \end{aligned}
\end{equation}

Here

\begin{equation}
  \bTheta_i = \begin{pmatrix}
      5+i& 1\\
      1& 5+i
  \end{pmatrix}, \mathbf{b} = \begin{pmatrix}
      1\\
      11\\
      1
  \end{pmatrix}, A_i = \begin{pmatrix}
      1+i & 2+i\\
      3+i & 4+i\\
      \ldots &\ldots \\
      19+i & 20+i
  \end{pmatrix}
\end{equation}

We run the algorithms for 500 iterations and obtain the result shown in Figure \ref{fig:2} and Figure \ref{fig:3}.

\begin{figure}[H] 
\centering
\includegraphics[width=0.8\textwidth,keepaspectratio]{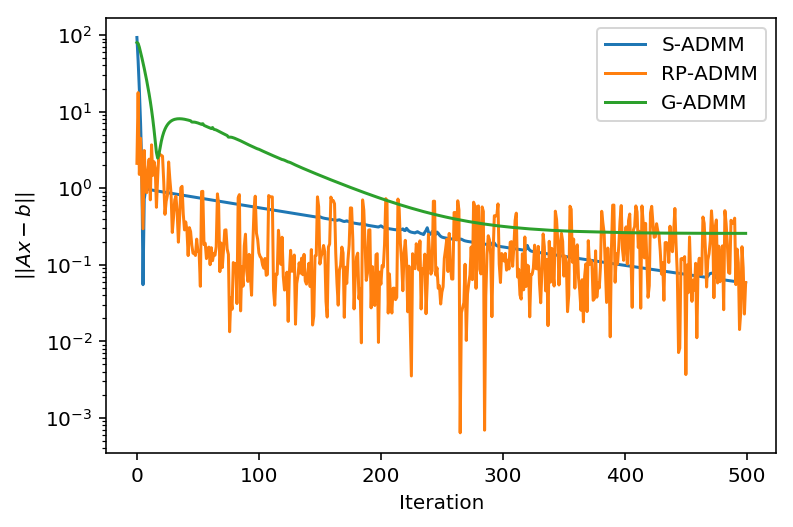}
\caption{$\|Ax^k-b\|$ in each iteration}
\label{fig:2}
\end{figure}

\begin{figure}[H] 
\centering
\includegraphics[width=0.8\textwidth,keepaspectratio]{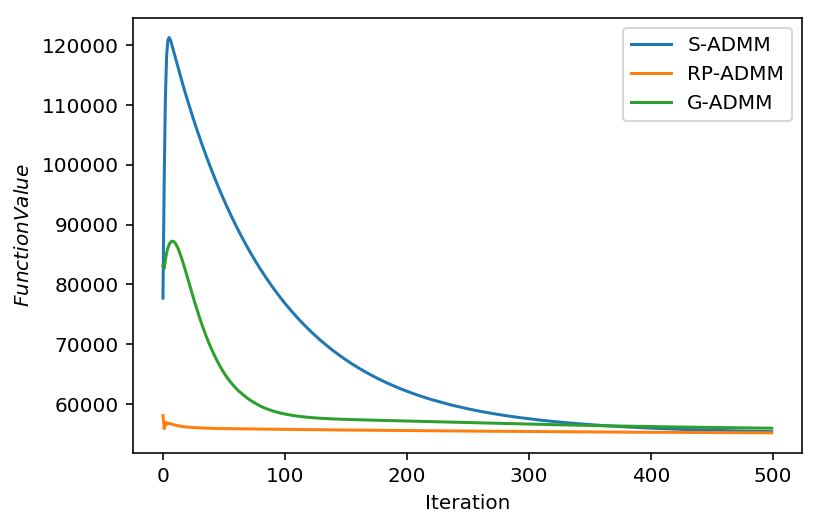}
\caption{Function value in each iteration}
\label{fig:3}
\end{figure}

We see in Figure \ref{fig:2} that the primal feasibility $\|Ax^k-b\|$ of S-ADMM decreases continuous and linearly, while for G-ADMM the residual tends to stop decreasing after 300 iterations. For PR-ADMM, the residual oscillates and decreases slowly. We have to remark that during the numerical experiments the author observes not every time PR-ADMM converges. The program may halt due to float overflow. This is easy to understand because in \cite{sun2015expected} the authors proves that the algorithm converges using the expectation argument. This argument only guarantees that PR-ADMM converges with high probability.

However, if viewed in terms of function value decreasing, S-ADMM may not seem very inviting. It is the slowest among all.

\section{Conclusions}

We propose a new algorithm based on block symmetric Gauss-Seidel algorithm to do the primal update in ADMM. The algorithm will converge if the objective is strongly convex and when the objective function is $0$, the rate can actually be found with linear algebra. This algorithm can be viewed as a de-randomized version of PR-ADMM and the dual update can be viewed as a Richard iteration correction in the Schur decomposition. Moreover, we interpret the algorithm as adding a regularization or proximal term to the original problem and then solve the problem analytically. This facilitates us to see how the algorithm helps accelerate the convergence of ADMM. We believe that a better correction step could be found to accelerate the algorithm, by doing a more sophisticated correction step.

\newpage

\bibliographystyle{plain}
\bibliography{ref}
\end{document}